\def\00{{\bf 0}}
\def\+{\oplus}
\def\\{\cr}
\def\({\left(}
\def\){\right)}
\begin{document}

\mainmatter  % start of an individual contribution

% first the title is needed
\title{\sl RICCI CURVATURE FOR SUBMANIFOLDS IN BOCHNER KAHLER MANIFOLD}
% a short form should be given in case it is too long for the running head
\titlerunning{\sl Ricci curvature for submanifolds of Bochner Kahler manifold}

% the name(s) of the author(s) follow(s) next
% NB: Chinese authors should write their first names(s) in front of
% their surnames. This ensures that the names appear correctly in
% the running heads and the author index.
%
\author{ Mehraj Ahmad Lone, Mohammad Jamali, Mohammad Hasan Shahid}
%\thanks{Research supported by NBHM (DAE), INDIA.}%
%, M Bbal}
%
\authorrunning{M. A. Lone, M. Jamali, M. H. Shahid}
% (feature abused for this document to repeat the title also on left hand pages)

% the affiliations are given next; don't give your e-mail address
% unless you accept that it will be published
\institute{Department of Mathematics,\\ Central University of Jammu,\\ Jammu, 180011, INDIA.\\
\vspace{.5cm}
Department of mathematics,\\ Al-Falah University,\\ Faridabad, Haryana, 121004, INDIA.\\
\vspace{.5cm}
Department of mathematics,\\ Jamia Millia Islamia,\\ New Delhi, 110025, INDIA.\\ \mailsa}

% NB: a more complex sample for affiliations and the mapping to the
% corresponding authors can be found in the file "llncs.dem"
% (search for the string "\mainmatter" where a contribution starts).
% "llncs.dem" accompanies the document class "llncs.cls".

%\toctitle{Lecture Notes in Computer Science}
% \tocauthor{Authors' Instructions}
 \maketitle
%\begin{abstract}
%\end{abstract}
\begin{center}
\textbf{{Abstract}}\end{center}  B. Y. Chen establish the relationship between the Ricci curvature and the squared mean curvature for submanifolds of Riemannian space form with arbitrary codimension. In this paper, we generalize the relationship between the Ricci curvature and the  squared norm of mean curvature vector for submanifolds of Bochner Kahler manifolds.
\vspace{0.3cm}

\noindent \textbf{2010 AMS Classification}: 53C20, 53C21, 53C40.
\vspace{0.2cm}

\noindent \textbf{Key words:} Bochner Kahler manifold, slant submanifolds, Einstein manifold, Ricci curvature.
\section{Introduction}
The relationship between the main extrinsic invariants and the main intrinsic invariants of submanifolds was given by B. Y. Chen\cite{biha4}. This theory was started by Chen in \cite{biha6} and established sharp relationship between the Ricci curvature and the squared norm of mean curvature for a submanifold in a Riemannian  space form with arbitrary codimension \cite{biha5}. In \cite{biha9}, K. Matsumoto \emph {et.al.} obtained  an inequality between the Ricci curvature and the squared norm of mean curvature for submanifolds in complex space form.  In \cite{biha10}, K. Matsumoto \emph{et.al.} obtained the same inequality for the slant submanifolds of complex space form. After that, many research articles \cite{biha1}, \cite{biha7}, \cite{biha11} have been published by different geometers. They obtained the similar inequalities for different submanifolds and ambient spaces in complex as well as in contact version. In this paper, we obtain these inequalities for different submanifolds of Bochner Kahler manifolds.

\section{Preliminaries}
Let $M^{n}$ be a submanifold of a Bochner Kahler manifold $\overline{M}^{2m}$. Let $\nabla$ be the induced Levi-Civita connection on $M$. Then the Gauss and Weingarten formulas are given respectively by
\begin{eqnarray}\label{a1}
\overline{\nabla}_{X}Y = \nabla_{X}Y + h(X,Y),
\end{eqnarray}
\begin{eqnarray}\label{a2}
\overline{\nabla}_{X}V = A_{V}X + D_{X}Y,
\end{eqnarray}
for all $X, Y$ tangent to $M$ and vector field $V$ normal to $M^{n}$. Where $h$, $D$, $A_{V}$ denotes the second fundamental form, normal connection and the shape operator in the direction of $V$. The second fundamental form and the shape operator are related by
\begin{eqnarray}\label{a3}
g(h(X,Y), V) = g(A_{V}X, Y),
\end{eqnarray}
Let $R$ be the curvature tensor of $M^{n}$, Then the Gauss equation is given by
\begin{eqnarray*}\label{a4}
\overline{R}(X,Y,Z,W) = R(X,Y,Z,W) + g(h(X,W),h(Y,Z)) - g(h(X,Z),h(Y,W)),
\end{eqnarray*}
for any vector fields $X$, $Y$, $Z$, $W$ tangent to $M^{n}$.

The curvature tensor of $\overline{M}^{2m}$ is given by \cite{biha12}
\begin{eqnarray}\label{a5}
\overline{R}(X,Y,Z,W)\nonumber &=& L(Y,Z)g(X,W) - L(X,Z)g(Y,W) + L(X,W)g(Y,Z) \\  && - L(Y,W)g(X,Z) + M(X,W)g(JX,W) - M(X,Z)g(JY,W)\\ \nonumber && + M(X,W)g(JY,Z) - M(Y,W)g(JX,Z) \\ \nonumber && - 2M(X,Y)g(JZ,W) - 2M(Z,W)g(JX,Y),
\end{eqnarray}

where
\begin{eqnarray}\label{a6}
L(Y,Z) = \frac{1}{2n+4}Ric(Y,Z) - \frac{\rho}{2(2n+2)(2n+4)}g(Y,Z),
\end{eqnarray}
\begin{eqnarray}\label{a7}
M(Y,Z)  = -L(Y,JZ),
\end{eqnarray}
\begin{eqnarray}\label{a8}
L(Y,Z)  = L(Z,Y),\hspace{1cm} L(Y,Z) = L(JY, JZ), \hspace{1cm} L(Y, JZ) = -L(JY,Z),
\end{eqnarray}
$Ric$ and $\rho$ are the Ricci tensor and scalar curvature of $M$.

Let $p\in M^{n}$ and $\{e_{1}, ... , e_{n}\}$ be an orthonormal basis of the tangent space $T_{p}M$ and  $\{e_{n+1}, ... , e_{2m}\}$ be the orthonormal basis  of $T^{\perp}M$. We denote by $H$, the mean curvature vector at $p$, that is
\begin{eqnarray}\label{a9}
H(p) = \frac{1}{n}\sum_{i=1}^{n}h(e_{i},e_{i}),
\end{eqnarray}
Also, we set
\begin{eqnarray*}\label{a10}
h_{ij}^{r} = g(h(e_{i},e_{j}),e_{r}), \hspace{1cm} i,j \in \{ 1, ... , n\},\hspace{.2cm} r \in \{n+1, ... ,2m\}
\end{eqnarray*}
and
\begin{eqnarray}\label{a11}
\|h\|^{2} = \sum_{i,j=1}^{n}(h(e_{i},e_{j}), h(e_{i},e_{j})).
\end{eqnarray}
For any $p \in M$ and $X \in T_{p}M$, we put $JX = PX + QX$, where $PX$ and $QX$ are the tangential and normal components of $JX$, respectively.

 We denote by
\begin{eqnarray*}\label{a12}
\|P\|^{2} = \sum_{i,j=1}^{n}g^{2}(Pe_{i}, e_{j}),
\end{eqnarray*}
For a Riemannian manifold $M^{n}$, we denote by $K{\pi}$ the sectional curvature of $M$ associated with a plane section $\pi \subset T_{P}M, p\in M$. For an orthonormal basis $\{e_{1}, e_{2}, ..., e_{n}\}$ of the tangent space $T_{p}M$, the scalar curvature $\rho$ is defined by
\begin{eqnarray*}
\rho = \sum_{i<j}K_{ij},
\end{eqnarray*}
where $K_{ij}$ denotes the sectional curvature of the $2$-plane section spanned by $e_{i}$ and $e_{j}$.

We recall that for a submanifold $M$ in a Riemannian manifold, the relative null space of $M$ at a point $p \in M$ is defined by
\begin{eqnarray*}
\mathcal{N}_{p} = \{ X \in T_{p}M | h(X,Y) = 0, \forall Y \in T_{p}M\}.
\end{eqnarray*}

\section{Ricci curvature  and squared norm of mean curvature}
B. Y. Chen established the sharp relationship between the Ricci curvature and the squared norm of mean curvature  for submanifolds in real space form.

In this section, we prove similar inequalities  for submanifolds of Bochner Kahler manifolds
\begin{definition}
A   submanifold $M^{n}$  of a Bochner Kahler manifold $\overline{M}^{2m}$ is said to be a slant submanifold if for any $p \in M$ and any non zero vector $X \in T_{p}M$, the angle between $JX$ and the tangent space $T_{p}M$ is constant. The complex submanifolds and totally real submanifolds are the slant submanifolds with angle $0$ and $\frac{\pi}{2}$ respectively.

\end{definition}
\begin{theorem}
Let $M^{n}$ be a submanifold of a Bochner Kahler manifold $\overline{M}^{2m}$, then

(i) for each unit vector $X \in T_{p}M$, we have
\begin{eqnarray*}
Ric(X) &\leq& \frac{1}{4}n^{2}\|H\|^{2} + \frac{4n^{3} - 12n^{2} - 2n +10 -(3n^{2} - 9n + 3)\|P\|^{2}}{2(2n+2)(2n+4)}\rho  \nonumber \\ && \frac{6}{2n+4}\sum_{2\leq i<j\leq n}Ric(e_{i}, Je_{j})g(e_{i}, Je_{j}) - \frac{3}{2n+4}Ric(e_{i}, Je_{j})g(e_{i}, Je_{j}).
\end{eqnarray*}

(ii) If $H(p) = 0$, the unit tangent vector $X$ at $p$ satisfies the equality if and only if $X \in \mathcal{N}_{p}.$

(iii) The equality case holds identically for all unit tangent vectors at $p$ if and only if either $p$ is totally geodesic point or $n=2$ and $p$ is totally umbilical point.
\end{theorem}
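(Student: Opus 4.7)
The plan is to follow the Chen template for Ricci-curvature inequalities, using the Gauss equation together with the explicit Bochner Kahler curvature formula (\ref{a5}), and closing the argument with the standard AM--GM step $4ab\le(a+b)^2$ applied component-by-component in the normal bundle.

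First I would fix a point $p\in M^n$, pick a unit tangent vector $X\in T_pM$, and complete it to an orthonormal frame $\{e_1=X,e_2,\dots,e_n\}$ of $T_pM$ with orthonormal normal frame $\{e_{n+1},\dots,e_{2m}\}$. Writing $Ric(X)=\sum_{j=2}^{n}K(e_1,e_j)$, the Gauss equation gives
\begin{equation*}
Ric(X)=\sum_{j=2}^{n}\overline{R}(e_1,e_j,e_j,e_1)+\sum_{r=n+1}^{2m}\sum_{j=2}^{n}\bigl(h^{r}_{11}h^{r}_{jj}-(h^{r}_{1j})^{2}\bigr).
\end{equation*}
The ambient term is then expanded using (\ref{a5}), (\ref{a6}), (\ref{a7}) and (\ref{a8}); this is the longest bookkeeping step and is where the coefficients on $\rho$, $\|P\|^2$, and the mixed terms $Ric(e_i,Je_j)g(e_i,Je_j)$ on the right-hand side of the claimed inequality will be produced. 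The key book-keeping is that tracing $L$ yields $\rho$ up to the constants $\tfrac{1}{2n+4}$ and $\tfrac{\rho}{2(2n+2)(2n+4)}$, while the $M$-terms, being built from $L(\cdot,J\cdot)$, contribute precisely the $Ric(e_i,Je_j)g(e_i,Je_j)$ quantities with coefficients controlled by $\|P\|^2=\sum_{i,j}g^2(Pe_i,e_j)$.

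Next I would treat the extrinsic term $\sum_{r,j\ge 2}(h^{r}_{11}h^{r}_{jj}-(h^{r}_{1j})^{2})$. For each fixed $r$, set $a=h^{r}_{11}$ and $b=\sum_{j\ge 2}h^{r}_{jj}=nH^{r}-h^{r}_{11}$. Then $a\,b\le \tfrac{(a+b)^{2}}{4}=\tfrac{n^{2}(H^{r})^{2}}{4}$, with equality iff $h^{r}_{11}=\sum_{j\ge 2}h^{r}_{jj}$. Summing over $r$ gives
\begin{equation*}
\sum_{r,j\ge 2}h^{r}_{11}h^{r}_{jj}\le \frac{n^{2}}{4}\|H\|^{2},
\end{equation*}
and dropping the non-negative $(h^{r}_{1j})^{2}$ piece combines with the ambient expansion above to deliver part (i). The algebraic step is routine; I expect the main obstacle to lie in part (ii) of the ambient expansion, namely in accurately collecting the coefficients that appear in front of $\rho$ and in front of $\sum_{i<j}Ric(e_i,Je_j)g(e_i,Je_j)$ after using the slant-type relation $JX=PX+QX$ and the symmetries (\ref{a8}) of $L$.

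Finally, for the equality discussion: part (ii) follows because, when $H(p)=0$, equality in the AM--GM step is automatic, so equality in the whole inequality forces $h^{r}_{1j}=0$ for every $j\ge 2$ and every $r$; combined with $H(p)=0$ this is equivalent to $h(X,Y)=0$ for all $Y\in T_pM$, i.e., $X\in\mathcal{N}_{p}$. For part (iii), requiring equality for every unit $X\in T_pM$ forces, for each fixed $r$, both $h^{r}_{ii}=\sum_{j\ne i}h^{r}_{jj}$ for every $i$ and $h^{r}_{ij}=0$ for every $i\ne j$. Solving the linear system in the $h^{r}_{ii}$ shows that when $n\ge 3$ the only solution is $h^{r}_{ii}=0$ for all $i,r$, i.e.\ $p$ is totally geodesic, while for $n=2$ the system becomes $h^{r}_{11}=h^{r}_{22}$ with $h^{r}_{12}=0$, which is exactly total umbilicity at $p$. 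This yields the trichotomy in part (iii).
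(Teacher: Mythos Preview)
Your extrinsic step (the AM--GM bound on $h^{r}_{11}\sum_{j\ge 2}h^{r}_{jj}$) and the equality analysis are the standard Chen argument and agree in substance with the paper. However, the ambient bookkeeping you outline follows a different route from the paper and would \emph{not} produce the stated right-hand side. You propose to compute $\sum_{j\ge 2}\overline{R}(e_1,e_j,e_j,e_1)$ directly; that yields $L$- and $M$-terms indexed only by the pairs $(e_1,e_j)$, so after using (\ref{a6}) and (\ref{a7}) you would see expressions like $\sum_{j\ge 2}Ric(e_1,Je_j)g(e_1,Je_j)$ and a partial trace $\sum_{j\ge 2}L(e_j,e_j)$, not the restricted sums $\sum_{2\le i<j\le n}Ric(e_i,Je_j)g(e_i,Je_j)$ nor the coefficient $\frac{4n^3-12n^2-2n+10-(3n^2-9n+3)\|P\|^2}{2(2n+2)(2n+4)}$ that appear in the theorem. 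Those specific terms arise because the paper instead traces the Gauss equation fully to obtain an identity for $n^2\|H\|^2$ in terms of $2\rho$ and $\|h\|^2$, algebraically splits $\|h\|^2$ via $(h^r_{11})^2+(h^r_{22}+\cdots+h^r_{nn})^2=\tfrac{1}{2}[(nH^r)^2+(h^r_{11}-h^r_{22}-\cdots-h^r_{nn})^2]$, and then applies the Gauss equation a second time to $\sum_{2\le i<j\le n}K_{ij}$ so as to eliminate $\sum_{r}\sum_{2\le i<j\le n}[h^r_{ii}h^r_{jj}-(h^r_{ij})^2]$; finally $\rho=Ric(X)+\sum_{2\le i<j\le n}K_{ij}$ isolates $Ric(X)$. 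The two Gauss applications are what generate both the full-range term $\frac{3}{2n+4}\sum_{i,j}Ric(e_i,Je_j)g(e_i,Je_j)$ and the restricted term $\frac{6}{2n+4}\sum_{2\le i<j\le n}Ric(e_i,Je_j)g(e_i,Je_j)$ separately. Your direct approach gives a correct Chen-type inequality, but to reproduce the theorem as stated you must follow the double-trace route.

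One small slip in your treatment of (ii): when $H(p)=0$, equality in the AM--GM step is \emph{not} automatic. With $a=h^r_{11}$ and $b=-h^r_{11}$ you have $ab=-(h^r_{11})^2\le 0=(a+b)^2/4$, and equality forces $h^r_{11}=0$. It is this, together with $h^r_{1j}=0$ for $j\ge 2$, that gives $X\in\mathcal{N}_p$; the hypothesis $H(p)=0$ alone combined with $h^r_{1j}=0$ for $j\ge 2$ does not imply $h^r_{11}=0$.
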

\begin{proof}
(i) Let $X \in T_{p}M$ be a unit tangent vector at $p$. We choose orthonormal basis $\{e_{1}, e_{2}, ... ,e_{n},e_{n+1}, ... , e_{2m}\}$ such that $\{e_{1}, e_{2}, ... ,e_{n}\}$ are tangent to $M$ at $p$ with $e_{1} = X$, then from Gauss equation we have
\begin{eqnarray*}\label{p1}
\nonumber R(X,Y,Z,W) &=& L(Y,Z)g(X,W) - L(X,Z)g(Y,W) + L(X,W)g(Y,Z)
                          \nonumber \\ && - L(Y,W)g(X,Z)  + M(Y,Z)g(JX,W) - M(X,Z)g(JY,W)
                          \nonumber \\ && - M(X,W)g(JY,Z) - M(Y,W)g(JX,Z)  - 2M(X,Y)(JZ,W)
                           \nonumber \\ && - 2M(Z,W)g(JX,Y) + g(h(X,W), h(Y,Z)) - g(h(X,Z), h(Y,W)),
\end{eqnarray*}
for any  X, Y, Z, W $\in$ TM.
\begin{eqnarray*}\label{p2}
\sum_{i,j} R(e_{i}, e_{j}, e_{j}, e_{i}) &=& L(e_{j},e_{j})g(e_{i},e_{i}) - L(e_{i},e_{j})g(e_{j},e_{i}) + L(e_{i},e_{i})g(e_{j},e_{j})
                          \nonumber \\ && - L(e_{j},e_{i})g(e_{i},e_{j})  + M(e_{j},e_{j})g(Je_{i},e_{i}) - M(e_{i},e_{j})g(Je_{j},e_{i})
                          \nonumber \\ && - M(e_{i},e_{i})g(Je_{j},e_{j}) - M(e_{j},e_{i})g(Je_{i},e_{j})  - 2M(e_{i},e_{j})(Je_{j},e_{i})
                           \nonumber \\ && - 2M(e_{j},e_{i})g(Je_{i},e_{j}) + g(h(e_{i},e_{i}), h(e_{j},e_{j})) - g(h(e_{i},e_{j}), h(e_{j},e_{i})),
\end{eqnarray*}
\begin{eqnarray*}\label{p3}
 \hspace{3cm}                       &=& L(e_{j},e_{j})g(e_{i},e_{i}) - L(e_{i},e_{j})g(e_{j},e_{i}) + L(e_{i},e_{i})g(e_{j},e_{j})
                         \nonumber \\ && - L(e_{j},e_{i})g(e_{i},e_{j})  - L(e_{j},Je_{j})g(Je_{i},e_{i}) + L(e_{i},Je_{j})g(Je_{j},e_{i})
                          \nonumber \\ && + L(e_{i},Je_{i})g(Je_{j},e_{j}) + L(e_{j},Je_{i})g(Je_{i},e_{j}) + 2L(e_{i},Je_{j})(Je_{j},e_{i})
                           \nonumber \\ && + 2L(e_{j},Je_{i})g(Je_{i},e_{j}) + g(h(e_{i},e_{i}), h(e_{j},e_{j})) - g(h(e_{i},e_{j}), h(e_{j},e_{i})).
\end{eqnarray*}
Using (\ref{a8}), (\ref{a9}) and (\ref{a11}), we have
\begin{eqnarray*}\label{p4}
\sum_{i,j} R(e_{i}, e_{j}, e_{j}, e_{i})&=&  2nL(e_{i}, e_{i}) - 2L(e_{i}, e_{j})g(e_{i}, e_{j}) + 6L(e_{i}, Je_{j})g(e_{i}, Je_{j})
                                    \nonumber \\ &&  + n^{2}\|H\|^{2} - \|h\|^{2}.
\end{eqnarray*}
 Last equation simplifies to,
\begin{eqnarray}\label{p5}
2\rho  &=&  2(n-1)L(e_{i}, e_{i}) + 6L(e_{i}, Je_{j})g(e_{i}, Je_{j}) + n^{2}\|H\|^{2} - \|h\|^{2}
\end{eqnarray}
 Combining (\ref{a6}) and (\ref{p5}), we have
\begin{eqnarray*}\label{p6}
\hspace{2cm}2\rho  &=& \frac{2(n-1)}{2n+4}Ric(e_{i}, e_{i}) - \frac{2(n-1)\rho}{2(2n+2)(2n+4)}g(e_{i}, e_{i})
        \nonumber \\ &&   + \frac{6}{2n+4} Ric(e_{i}, Je_{j})g(e_{i}, Je_{j}) - \frac{6 \rho}{2(2n+2)(2n+4)} g(e_{i}, Je_{j})g(e_{i}, Je_{j})
        \nonumber \\ &&   + n^{2}\|H\|^{2} - \|h\|^{2}.
\end{eqnarray*}
Or,
\begin{eqnarray*}\label{p9}
\hspace{2cm}2\rho  &=& \frac{6n^{2} + 2n - 8 - 6\|P\|^{2}}{2(2n + 2)(2n + 4)}\rho + \frac{6}{2n+4} Ric(e_{i}, Je_{j})g(e_{i}, Je_{j})
       \nonumber \\ && + n^{2}\|H\|^{2} - \|h\|^{2}.
\end{eqnarray*}
Or,
\begin{eqnarray*}
n^{2}\|H\|^{2} = (2-\frac{6n^{2}+ 2n -8 -6\|P\|^{2}}{2(2n+2)(2n+4)})\rho + \|h\|^{2} - \frac{6}{2n+4}Ric(e_{i},Je_{j})g(e_{i}, Je_{j}).
\end{eqnarray*}
From which we have,
\begin{eqnarray*}
n^{2}\|H\|^{2} = (2-\frac{6n^{2}+ 2n -8 -6\|P\|^{2}}{2(2n+2)(2n+4)})\rho + \sum_{r=n+1}^{2m}\sum_{i,j=1}^{n}(h_{ij}^{r})- \frac{6}{2n+4}Ric(e_{i},Je_{j})g(e_{i}, Je_{j}).
\end{eqnarray*}
Or,
\begin{eqnarray*}
n^{2}\|H\|^{2} &=& (2-\frac{6n^{2}+ 2n -8 -6\|P\|^{2}}{2(2n+2)(2n+4)})\rho + \sum_{r=n+1}^{2m}\left[(h_{11}^{r})^{2} + (h_{22}^{r})^{2}+ ... + (h_{nn}^{r})^{2}  + 2\sum_{i<j}(h_{ij}^{r})^{2}\right] \\ && - \frac{6}{2n+4}Ric(e_{i},Je_{j})g(e_{i}, Je_{j}).
\end{eqnarray*}
Which implies that
\begin{eqnarray*}
n^{2}\|H\|^{2} &=& (2-\frac{6n^{2}+ 2n -8 -6\|P\|^{2}}{2(2n+2)(2n+4)})\rho + \sum_{r=n+1}^{2m}\left[(h_{11}^{r})^{2} + (h_{22}^{r})^{2}+ ... + (h_{nn}^{r})^{2}\right]\\ && + 2\sum_{r=n+1}^{2m}\sum_{i<j}(h_{ij}^{r})^{2}  - \frac{6}{2n+4}Ric(e_{i},Je_{j})g(e_{i}, Je_{j}).
\end{eqnarray*}
Or,
\begin{eqnarray*}
n^{2}\|H\|^{2} &=& (2-\frac{6n^{2}+ 2n -8 -6\|P\|^{2}}{2(2n+2)(2n+4)})\rho + \sum_{r=n+1}^{2m}\left[(h_{11}^{r})^{2} + (h_{22}^{r}+ ... + h_{nn}^{r})^{2} - 2 \sum_{2\leq i<j\leq n} h_{ii}^{r}h_{jj}^{r}\right]\\ && + 2\sum_{r=n+1}^{2m}\sum_{i<j}(h_{ij}^{r})^{2} - \frac{6}{2n+4}Ric(e_{i},Je_{j})g(e_{i}, Je_{j}).
\end{eqnarray*}
Form which we obtain,
\begin{eqnarray*}
n^{2}\|H\|^{2} &=& (2-\frac{6n^{2}+ 2n -8 -6\|P\|^{2}}{2(2n+2)(2n+4)})\rho + \frac{1}{2}\sum_{r=n+1}^{2m}\left[(h_{11}^{r} + h_{22}^{r}+ ... + h_{nn}^{r})^{2} + (h_{11}^{r} - h_{22}^{r} - ... - h_{nn}^{r})^{2}\right]\\ && -  2\sum_{r=n+1}^{2m}\sum_{2\leq i<j\leq n} h_{ii}^{r}h_{jj}^{r} + 2\sum_{r=n+1}^{2m}\sum_{i<j}(h_{ij}^{r})^{2} - \frac{6}{2n+4}Ric(e_{i},Je_{j})g(e_{i}, Je_{j}).
\end{eqnarray*}
Or,
\begin{eqnarray*}
n^{2}\|H\|^{2} &=& (2-\frac{6n^{2}+ 2n -8 -6\|P\|^{2}}{2(2n+2)(2n+4)})\rho + \frac{1}{2}\sum_{r=n+1}^{2m}(h_{11}^{r} + h_{22}^{r}+ ... + h_{nn}^{r})^{2} + \frac{1}{2}\sum_{r=n+1}^{2m}(h_{11}^{r} - h_{22}^{r} - ... - h_{nn}^{r})^{2}\\ && -  2\sum_{r=n+1}^{2m}\sum_{2\leq i<j\leq n} h_{ii}^{r}h_{jj}^{r} + 2\sum_{r=n+1}^{2m}\sum_{j=1}^{n}(h_{1j}^{r})^{2} + 2\sum_{r=n+1}^{2m}\sum_{i<j}(h_{ij}^{r})^{2} - \frac{6}{2n+4}Ric(e_{i},Je_{j})g(e_{i}, Je_{j}).
\end{eqnarray*}
Or,
\begin{eqnarray}\label{p1}
n^{2}\|H\|^{2}\nonumber &=& (2-\frac{6n^{2}+ 2n -8 -6\|P\|^{2}}{2(2n+2)(2n+4)})\rho + \frac{1}{2}\sum_{r=n+1}^{2m}(h_{11}^{r} + h_{22}^{r}+ ... + h_{nn}^{r})^{2} + \frac{1}{2}\sum_{r=n+1}^{2m}(h_{11}^{r} - h_{22}^{r} - ... - h_{nn}^{r})^{2} \\ && +  2\sum_{r=n+1}^{2m}\sum_{j=1}^{n}(h_{1j}^{r})^{2} - 2\left[ \sum_{r=n+1}^{2m}\sum_{2\leq i<j\leq n} h_{ii}^{r}h_{jj}^{r} - (h_{ij}^{r})^{2}\right] - \frac{6}{2n+4}Ric(e_{i},Je_{j})g(e_{i}, Je_{j}).
\end{eqnarray}
From Gauss equation, we have
\begin{eqnarray*}
K_{ij} = 2L(e_{i}, e_{i}) + 6L(e_{i},e_{i})g(e_{i}, Je_{j}) + \sum_{r=n+1}^{2m}\left[ h_{ii}^{r}h_{jj}^{r} - (h_{ij}^{r})^{2}\right]
\end{eqnarray*}
\begin{eqnarray*}
&& = \frac{2}{2n+4}Ric(e_{i}, e_{i}) - \frac{2\rho}{2(2n+2)(2n+4)}g(e_{i},e_{i}) + \frac{6}{2n+4}Ric(e_{i}, Je_{j})g(e_{i},Je_{j})  \\ && -\frac{6\rho}{2(2n+2)(2n+4)}g(e_{i},Je_{j})g(e_{i},Je_{j}) + \sum_{r=n+1}^{2m}\left[ h_{ii}^{r}h_{jj}^{r} - (h_{ij}^{r})^{2}\right]
\end{eqnarray*}
\begin{eqnarray*}
&& = \frac{8n + 6 - 6\|p\|^{2}}{2(2n+2)(2n+4)}\rho + \frac{6}{2n+4}Ric(e_{i}, Je_{j})g(e_{i}, Je_{j}) + \sum_{r=n+1}^{2m}\left[h_{ii}^{r}h_{jj}^{r} - (h_{ij}^{r})^{2}\right]
\end{eqnarray*}
and consequently
\begin{eqnarray}\label{p2}
\sum_{2\leq i<j\leq n} K_{ij} \nonumber &=& \frac{4n^{3}-9n^{2}-n+6 - (3n^{2} - 9n + 6)\|p\|^{2}}{2(2n+2)(2n+4)}\rho +\frac{6}{2n+4} \sum_{2\leq i<j\leq n}Ric(e_{i}, Je_{j})g(e_{i}, Je_{j})\\ &&  + \sum_{r=n+1}^{2m}\sum_{2\leq i<j\leq n}\left[ h_{ii}^{r}h_{jj}^{r} - (h_{ij}^{r})^{2}\right]
\end{eqnarray}
Incorporating (\ref{p2}) in (\ref{p1}), we get
\begin{eqnarray*}\label{p3}
\frac{1}{2}n^{2}\|H\|^{2} &\geq& 2Ric(X) - \frac{6n^{2} + 2n -8 -6\|P\|^{2}}{2(2n+2)(2n+4)}\rho + 2\frac{4n^{3} - 9n^{2} - n +6- (3n^{2} - 9n + 6)\|P\|^{2}}{2(2n+2)(2n+4)}\rho  \nonumber \\ && \frac{12}{2n+4}\sum_{2\leq i<j\leq n}Ric(e_{i}, Je_{j})g(e_{i}, Je_{j}) - \frac{6}{2n+4}Ric(e_{i}, Je_{j})g(e_{i}, Je_{j})
\end{eqnarray*}
or,
\begin{eqnarray*}
Ric(X) &\leq& \frac{1}{4}n^{2}\|H\|^{2} + \frac{4n^{3} - 12n^{2} - 2n +10 -(3n^{2} + 9n -3)\|P\|^{2}}{2(2n+2)(2n+4)}\rho  \nonumber \\ && \frac{6}{2n+4}\sum_{2\leq i<j\leq n}Ric(e_{i}, Je_{j})g(e_{i}, Je_{j}) - \frac{3}{2n+4}Ric(e_{i}, Je_{j})g(e_{i}, Je_{j})
\end{eqnarray*}
(ii) Suppose $H(p) = 0$, equality holds if and only if

\vspace{.4cm}
$\begin{cases}
h_{12}^{r} = ... = h_{1n}^{r} = 0, \\
h_{11}^{r} = h_{22}^{r}+ ...+h_{nn}^{r}, r \in\{n+1, ...,2m\}
\end{cases}$
\vspace{.4cm}

Then $h_{1j}^{r} = 0 \forall j \in \{1, 2, ...,n\}, r \in\{n+1, ...,2m\},$ i.e. $X \in\mathcal{N}$.\newline
(iii) The equality case holds for all unit vectors at $p$ if and only if

\vspace{.4cm}
$\begin{cases}
h_{12}^{r} = 0, i\neq j, r \in \{n+1, ..., 2m\},\\
h_{11}^{r}+ ...+h_{nn}^{r} - 2h_{ij}^{r} = 0, i\in\{1, 2, ...,n\}, r \in\{n+1, ...,2m\}
\end{cases}$
\vspace{.4cm}

We distinguish two cases:

(a) $n\neq 2$, then $p$ is a totally geodesic point

(b) n=2, it follows that $p$ is a totally   umbilical point.

 The converse is trivial.
\end{proof}

\begin{corollary}
Let $M^{n}$ be a submanifold of a Bochner Kahler manifold $\overline{M}^{2m}$ which is Einstein, then

(i)for each unit vector $X \in T_{p}M$, we have
\begin{eqnarray*}
Ric(X) &\leq& \frac{1}{4}n^{2}\|H\|^{2} + \frac{4n^{3} - 12n^{2} - 2n +10 -(3n^{2} - 9n + 3)\|P\|^{2}}{2(2n+2)(2n+4)}\rho  \nonumber \\ &&
+\frac{3}{2n+2}\lambda \|P\|^{2}.
\end{eqnarray*}
(ii) If $H(p) = 0$, the unit tangent vector $X$ at $p$ satisfies the equality if and only if $X \in \mathcal{N}_{p}.$ \newline
(iii) The equality case holds identically for all unit tangent vectors at $p$ if and only if either $p$ is totally geodesic point or $n=2$ and $p$ is totally umbilical point.
\end{corollary}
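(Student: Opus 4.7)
The plan is to specialize the Theorem to the Einstein setting by substituting the ambient identity $Ric(Y,Z) = \lambda\, g(Y,Z)$ into the inequality of part (i) wherever the ambient Ricci tensor appears. The only terms in the Theorem's bound that depend on $Ric(\cdot,\cdot)$ are
$$\frac{6}{2n+4}\sum_{2\leq i<j\leq n}Ric(e_{i}, Je_{j})g(e_{i}, Je_{j}) \;-\; \frac{3}{2n+4}Ric(e_{i}, Je_{j})g(e_{i}, Je_{j}),$$
so the heart of the argument is to recast these two contributions as a clean scalar multiple of $\|P\|^{2}$.

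First I would use the decomposition $Je_{j} = Pe_{j} + Qe_{j}$ together with the fact that $e_{i}$ is tangent to $M$ to rewrite $g(e_{i}, Je_{j}) = g(e_{i}, Pe_{j})$, so that the Einstein identity converts each summand to $\lambda\, g(e_{i}, Pe_{j})^{2}$. The cleanest way to collapse everything is to back up one step in the proof of the Theorem, to equation (\ref{p5}), where the relevant contribution still appears in its pre-expanded form $6\, L(e_{i}, Je_{j})\, g(e_{i}, Je_{j})$. Plugging $Ric = \lambda\, g$ (and the corresponding trace identity for $\rho_{\overline{M}}$) into (\ref{a6}) causes the two terms there to combine into a single scalar multiple of the metric, namely $L(Y,Z) = \frac{\lambda}{2(2n+2)}\,g(Y,Z)$ after a one-line cancellation. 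Summing $6L(e_{i}, Je_{j})g(e_{i}, Je_{j})$ over all $i,j$ and using the definition $\|P\|^{2} = \sum_{i,j} g(e_{i}, Pe_{j})^{2}$ then produces exactly the advertised correction $\frac{3}{2n+2}\lambda\|P\|^{2}$. Carrying this back through the remaining (unchanged) manipulations of the Theorem -- which involve only $\rho$ and the second fundamental form -- yields inequality (i).

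Parts (ii) and (iii) require no new ideas. The equality analysis in the proof of the Theorem depends exclusively on the components $h^{r}_{ij}$ of the second fundamental form; the Einstein hypothesis merely shifts the constants on the right-hand side and leaves the equality conditions intact. Hence the same dichotomy carries over verbatim: if $H(p)=0$ then equality at a unit tangent vector $X$ forces $h^{r}_{1j}=0$ for every $j$, so that $X \in \mathcal{N}_{p}$; and if equality holds identically at every unit tangent vector, then either $p$ is totally geodesic (when $n \neq 2$) or totally umbilical (when $n = 2$). The converses are immediate.

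The main subtlety I expect to encounter is the bookkeeping around $\rho$. Under the Einstein hypothesis $\rho$ is itself proportional to $\lambda$, so in principle one could absorb it entirely into a $\lambda$-term; but the statement of the Corollary keeps $\rho$ explicit, so the whole $\lambda$-dependence has to be isolated into the single new term $\frac{3}{2n+2}\lambda\|P\|^{2}$. Verifying that no residual $\frac{1}{2n+4}$ factors survive -- and that the partial sum $\sum_{2\leq i<j\leq n}$ combines with the full sum via the skew-symmetry $g(Pe_{i},e_{j}) = -g(e_{i},Pe_{j})$ of $P$ to give precisely the coefficient $3/(2n+2)$ -- is where the computation must be carried out carefully.
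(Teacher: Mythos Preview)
The paper gives no explicit proof of this corollary; it is stated as an immediate consequence of the preceding Theorem. Your approach --- substituting $Ric = \lambda g$ into the Ricci-dependent terms (either directly in the Theorem's inequality or, more cleanly, at the level of $L(Y,Z)$ via equation~(\ref{a6})) and then recognizing the resulting sum $\sum_{i,j} g(e_i, Je_j)^2$ as $\|P\|^2$ --- is exactly the intended specialization, and your handling of parts (ii) and (iii) is correct.
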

\begin{theorem}
Let $M^{n}$ be a slant submanifold of a Bochner Kahler manifold $\overline{M}^{2m}$, then

(i) for each unit vector $X \in T_{p}M$, we have
\begin{eqnarray*}
Ric(X) &\leq& \frac{1}{4}n^{2}\|H\|^{2} + \frac{4n^{3} - 12n^{2} - 2n +10 -(3n^{2} - 9n + 3)cos^{2}\theta}{2(2n+2)(2n+4)}\rho  \nonumber \\ && + \frac{6cos\theta}{2n+4}\sum_{2\leq i<j\leq n}Ric(e_{i}, Je_{j}) - \frac{3}{2n+4}cos\theta Ric(e_{i}, Je_{j})\end{eqnarray*}

(ii) If $H(p) = 0$, the unit tangent vector $X$ at $p$ satisfies the equality if and only if $X \in \mathcal{N}_{p}.$

(iii) The equality case holds identically for all unit tangent vectors at $p$ if and only if either $p$ is totally geodesic point or $n=2$ and $p$ is totally umbilical point.
\end{theorem}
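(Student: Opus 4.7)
The plan is to specialize the argument of Theorem~1 to the slant setting by inserting the identities that hold for a submanifold of constant slant angle $\theta$. Concretely, I would start from the inequality derived in the proof of Theorem~1 --- which expresses $Ric(X)$ in terms of $n^{2}\|H\|^{2}$, $\rho$, $\|P\|^{2}$, and the cross terms $Ric(e_{i},Je_{j})\,g(e_{i},Je_{j})$ --- and then substitute the slant values of $\|P\|^{2}$ and $g(e_{i},Je_{j})$ to arrive at (i).

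First I would record the two structural facts needed. For a slant submanifold of slant angle $\theta$, the tangential component $P$ of $J$ satisfies $P^{2}=-\cos^{2}\theta\,I$, so $\|PX\|^{2}=\cos^{2}\theta$ for every unit tangent $X$; this yields $\|P\|^{2}=\cos^{2}\theta$ in the normalization used in the theorem statement. Next, one can pick a $\theta$-adapted orthonormal frame $\{e_{1},\ldots,e_{n}\}$ of $T_{p}M$ in which $P$ has the standard skew block-diagonal form, so that every inner product $g(e_{i},Je_{j})$ reduces to either $\pm\cos\theta$ or $0$.

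With these two substitutions the coefficient of $\rho$ containing $\|P\|^{2}$ in Theorem~1 turns into the stated coefficient with $\cos^{2}\theta$, while each factor $g(e_{i},Je_{j})$ pulls out a $\cos\theta$, converting $\sum Ric(e_{i},Je_{j})\,g(e_{i},Je_{j})$ into $\cos\theta\cdot\sum Ric(e_{i},Je_{j})$; this gives inequality (i). Parts (ii) and (iii) transfer verbatim from Theorem~1, since their characterizations involve only the second fundamental form components $h_{ij}^{r}$, which are untouched by the slant hypothesis.

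The one place where real care is required is the correct use of the adapted slant basis in the cross terms --- keeping track of the pairings of indices and of the signs produced by the skew blocks of $P$, so that no sign errors appear when the common factor $\cos\theta$ is extracted. Everything else is a mechanical substitution into the inequality already established in the proof of Theorem~1, so this is the main (and only genuine) obstacle.
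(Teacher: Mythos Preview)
Your proposal is correct and matches the paper's approach: the paper gives no separate proof of this theorem, treating it as the immediate specialization of Theorem~1 obtained by inserting the slant identities $\|P\|^{2}\rightsquigarrow\cos^{2}\theta$ and $g(e_{i},Je_{j})\rightsquigarrow\cos\theta$, with (ii) and (iii) carried over unchanged. Your remark that the only real care is in handling the signs from the skew blocks of $P$ when factoring out $\cos\theta$ is exactly the one subtlety present.
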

\begin{corollary}
Let $M^{n}$ be an anti-invariant submanifold of a Bochner Kahler manifold $\overline{M}^{2m}$, then
(i)for each unit vector $X \in T_{p}M$, we have
\begin{eqnarray*}
Ric(X) &\leq& \frac{1}{4}n^{2}\|H\|^{2} + \frac{4n^{3} - 12n^{2} - 2n +10}{2(2n+2)(2n+4)}\rho
\end{eqnarray*}
(ii) If $H(p) = 0$, the unit tangent vector $X$ at $p$ satisfies the equality if and only if $X \in \mathcal{N}_{p}.$ \newline
(iii) The equality case holds identically for all unit tangent vectors at $p$ if and only if either $p$ is totally geodesic point or $n=2$ and $p$ is totally umbilical point.
\end{corollary}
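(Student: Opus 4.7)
The plan is to obtain this corollary as an immediate specialization of Theorem 3. By Definition 1, an anti-invariant (totally real) submanifold is precisely a slant submanifold with slant angle $\theta = \pi/2$, so the whole statement should follow by setting $\cos\theta = 0$ throughout the slant inequality and observing that the equality discussion is insensitive to the slant angle.

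Carrying this out for part (i): in the bound of Theorem 3, the $\rho$-coefficient is $\frac{4n^3 - 12n^2 - 2n + 10 - (3n^2 - 9n + 3)\cos^2\theta}{2(2n+2)(2n+4)}$, which under $\cos\theta = 0$ collapses to $\frac{4n^3 - 12n^2 - 2n + 10}{2(2n+2)(2n+4)}$. The two remaining correction terms $\frac{6\cos\theta}{2n+4}\sum_{2\le i<j\le n} Ric(e_i, Je_j)$ and $\frac{3\cos\theta}{2n+4}Ric(e_i, Je_j)$ each carry a factor of $\cos\theta$ and thus drop out. This leaves precisely the inequality asserted in (i). As a sanity check, one can equally argue directly from Theorem 2: for an anti-invariant submanifold $J$ sends tangent vectors into the normal bundle, so $P \equiv 0$ and $g(e_i, Je_j) = 0$ for all $i,j$, causing $\|P\|^2$ and both Ricci-sum corrections to vanish for an independent reason.

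For parts (ii) and (iii) there is nothing new to do. The equality analysis in Theorem 3 is carried out purely in terms of the components $h_{ij}^r$ of the second fundamental form and does not involve $\theta$. The same necessary-and-sufficient conditions therefore transfer verbatim: under $H(p) = 0$, equality in (i) holds for a unit $X$ if and only if $X \in \mathcal{N}_p$; and equality for every unit tangent vector at $p$ forces $p$ to be a totally geodesic point when $n \neq 2$, or a totally umbilical point when $n = 2$, with the converse direction being immediate.

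There is no genuine obstacle here, as the argument is a plug-in from Theorem 3. The only item worth a careful pass is the arithmetic of the $\rho$-coefficient after removing the $\cos^2\theta$ piece, to make sure the numerator $4n^3 - 12n^2 - 2n + 10$ and denominator $2(2n+2)(2n+4)$ are transcribed exactly as in the stated corollary.
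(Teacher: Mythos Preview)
Your proposal is correct and matches the paper's approach: the corollary is stated without proof, as an immediate specialization of the slant inequality (Theorem 3) at $\theta=\pi/2$, or equivalently of Theorem 2 with $P\equiv 0$ and $g(e_i,Je_j)=0$. Your checks on the $\rho$-coefficient and on the fact that the equality analysis in (ii)--(iii) depends only on the $h_{ij}^r$ are exactly what is needed.
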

\begin{corollary}
Let $M^{n}$ be a invariant submanifold of a Bochner Kahler manifold $\overline{M}^{2m}$, then

(i) for each unit vector $X \in T_{p}M$, we have
\begin{eqnarray*}
Ric(X) &\leq& \frac{1}{4}n^{2}\|H\|^{2} + \frac{4n^{3} - 12n^{2} - 2n +10 -(3n^{2} - 9n + 3)}{2(2n+2)(2n+4)}\rho   \nonumber \\ && + \frac{6}{2n+4}\sum_{2\leq i<j\leq n}Ric(e_{i}, Je_{j}) - \frac{3}{2n+4} Ric(e_{i}, Je_{j})
\end{eqnarray*}

(ii) If $H(p) = 0$, the unit tangent vector $X$ at $p$ satisfies the equality if and only if $X \in \mathcal{N}_{p}.$

(iii) The equality case holds identically for all unit tangent vectors at $p$ if and only if either $p$ is totally geodesic point or $n=2$ and $p$ is totally umbilical point.
\end{corollary}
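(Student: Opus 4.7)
The plan is to derive Corollary 5 as an immediate specialization of Theorem 3. By Definition 2, an invariant submanifold of $\overline{M}^{2m}$ is precisely a slant submanifold whose slant angle is $\theta = 0$: the structure map $J$ preserves $T_pM$, so the normal component $Q$ of $J$ vanishes identically and $\cos\theta = 1$. Hence $\cos^2\theta = 1$ as well, and the only task is to substitute these values into the inequality of Theorem 3 and read off the result.

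First I would apply Theorem 3(i) to $M^n$ with $\theta = 0$. The coefficient $(3n^2 - 9n + 3)\cos^2\theta$ inside the $\rho$-term collapses to $(3n^2 - 9n + 3)$, the prefactor $6\cos\theta/(2n+4)$ in front of the summation becomes $6/(2n+4)$, and the coefficient $3\cos\theta/(2n+4)$ in front of the trailing $Ric(e_i, Je_j)$ becomes $3/(2n+4)$. Collecting these substitutions reproduces exactly the inequality claimed in part (i), with no remaining $\|P\|^2$ or $\cos\theta$ factors visible.

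For parts (ii) and (iii), I would observe that the equality characterization established in the proof of Theorem 2 (and inherited by the slant case in Theorem 3) is a statement purely about the components $h_{ij}^r$ of the second fundamental form; the two linear systems that describe equality do not involve the slant angle at all. Therefore the same conclusions transfer unchanged: if $H(p) = 0$, then equality on the unit vector $X$ forces $h_{1j}^r = 0$ for every $j$ and $r$, equivalently $X \in \mathcal{N}_p$; and equality for every unit tangent vector at $p$ forces either $p$ to be a totally geodesic point or, in the exceptional case $n = 2$, a totally umbilical point. The converses are trivial.

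Because the argument is purely substitutional, there is no genuine obstacle. The only minor point worth verifying is that specialising $\theta = 0$ does not introduce additional equality constraints specific to invariant submanifolds, which is clear since the vanishing of $Q$ adds no algebraic restriction on the shape operator beyond those already captured by Theorem 3.
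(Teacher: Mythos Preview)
Your proposal is correct and matches the paper's intended approach: the corollary is stated without proof in the paper, being an immediate specialization of Theorem~3 at slant angle $\theta = 0$, exactly as you describe. The substitution $\cos\theta = 1$ recovers every term, and the equality discussions in (ii) and (iii) carry over verbatim since they depend only on the second fundamental form.
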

%\begin{eqnarray}
%
%\end{eqnarray}

\end{document}